\definecolor{matlabblue}{rgb}{0,0.4470,0.7410}
\definecolor{matlabred}{rgb}{0.8500,0.3250,0.0980}
\definecolor{matlabyellow}{rgb}{0.9290,0.6940,0.1250}
\definecolor{matlabpurple}{rgb}{0.4940,0.1840,0.5560}
\definecolor{matlabgreen}{rgb}{0.4660,0.6740,0.1880}
\definecolor{matlabblack}{rgb}{0,0,0}
\definecolor{matlabmagenta}{rgb}{1,0,1}
\newcommand{\red}[1]{{\color{red}#1}}
\renewcommand{\vec}[1]{\mathbf{#1}}
\newcommand{\ddt}[1]{\frac{{\rm d}#1}{{\rm d} t}}
\newcommand{\smallddt}{\tfrac{{\rm d}}{{\rm d} t}}
\newcommand{\rd}{{\rm d}}
\newcommand{\E}{\mathbb{E}}
\newcommand{\Prob}{{\rm Pr}}
\newcommand{\objFun}{g}
\newcommand{\conFun}{\vec{h}}
\definecolor{dp1}{rgb}{0.1094,0.1094,0.1927}
\definecolor{dp2}{rgb}{0.2188,0.2188,0.3438}
\definecolor{dp3}{rgb}{0.3281,0.3698,0.4531}
\definecolor{dp4}{rgb}{0.4375,0.5208,0.5625}
\definecolor{dp5}{rgb}{0.5469,0.6719,0.6719}
\definecolor{dp6}{rgb}{0.6979,0.7812,0.7812}
\newcommand{\dpone}[1]{{\color{dp1}#1}}
\newcommand{\dptwo}[1]{{\color{dp2}#1}}
\newcommand{\dpthree}[1]{{\color{dp3}#1}}
\newcommand{\dpfour}[1]{{\color{dp4}#1}}
\newcommand{\dpfive}[1]{{\color{dp5}#1}}
\newcommand{\dpsix}[1]{{\color{dp6}#1}}
\newtheorem{theorem}{Theorem}
\newtheorem{proposition}{Proposition}
\newtheorem{lemma}{Lemma}
\theoremstyle{remark}
\title{\LARGE \bf
Uncertainty propagation for nonlinear dynamics: \\A polynomial optimization approach}
\author{Francesca Covella and Giovanni Fantuzzi
\thanks{F. Covella (\href{mailto:francesca.covella@polimi.it}{francesca.covella@polimi.it}) is with the Department of Aerospace Science and Technology at Politecnico di Milano.}
\thanks{G. Fantuzzi (\href{mailto:giovanni.fantuzzi@fau.de}{giovanni.fantuzzi@fau.de}) is with the Department of Data Science, FAU Erlangen--N\"urnberg. Part of this work was supported by an Imperial College Research Fellowship.}
\thanks{For the purpose of open access, the authors have applied a CC-BY licence to any Author Accepted Manuscript version arising.}
}
\begin{document}
\maketitle
\begin{abstract}
We use Lyapunov-like functions and convex optimization to propagate uncertainty in the initial condition of nonlinear systems governed by ordinary differential equations. 
We consider the full nonlinear dynamics without approximation, producing rigorous bounds on the expected future value of a quantity of interest even when only limited statistics of the initial condition (e.g., mean and variance) are known.
For dynamical systems evolving in compact sets, the best upper (lower) bound coincides with the largest (smallest) expectation among all initial state distributions consistent with the known statistics. For systems governed by polynomial equations and polynomial quantities of interest, one-sided estimates on the optimal bounds can be computed using tools from polynomial optimization and semidefinite programming. Moreover, these numerical bounds provably converge to the optimal ones in the compact case.
We illustrate the approach on a van der Pol oscillator and on the Lorenz system in the chaotic regime.
\end{abstract}

\section{Introduction} \label{s: intro}

Can one account for uncertainty in the initial condition of a dynamical system when predicting its future behaviour? This question arises, for example, in astrodynamics, where measurement errors and the chaotic nature of multi-body orbital systems make it a challenge to track space debris or to predict the orbit of hazardous asteroids~\cite{Luo2017AMechanics,Vasile2019}.

If the system's initial state is a random variable with a given probability distribution, the future state distribution can be found by solving the Liouville equation for deterministic dynamics, or the Fokker--Planck equation for stochastic dynamics. Doing so enables one to calculate future expectations of the system's state or functions thereof. Unfortunately, this approach is practical only if the system's state has dimension three or less, otherwise solving the Liouville or Fokker--Planck equations becomes prohibitively expensive.

Many alternatives have of course already been developed. The simplest one is perhaps the Monte Carlo approach, where statistical analysis is performed after the system's dynamics are simulated using a large number of different initial conditions sampled from the given initial distribution. More sophisticated approaches include unscented transforms \cite{Julier2004}, polynomial chaos expansions \cite{Jones2013}, state transition tensor analysis \cite{Park2006}, expansions via differential algebra \cite{ValliTesi}, and modelling via gaussian mixtures \cite{Terejanu2008}. These techniques successfully approximate future expectations, but introduce uncontrolled simplifications (e.g., by considering a finite number of samples or a series expansion) that make it hard to estimate the approximation accuracy. Moreover, to implement these methods one must know the initial probability distribution of the state variables, or at least be able to sample from it. This is impossible if one has access only to limited statistics of the initial state, such as its mean and variance. 

This work addresses these issues by describing an uncertainty propagation framework for deterministic dynamical systems that can handle nonlinear dynamics as well as partial knowledge of the initial state distribution. Precisely, given limited statistics of the initial state, we bound the expected value of an `observable' function at a future time from above and from below by constructing \emph{auxiliary functions} of the state and time variables. Such functions resemble Lyapunov functions but satisfy different constraints, and have already been used in various guises to study nonlinear systems (see, e.g.,
\cite{cherny2014,
    Fantuzzi2016,
    Goluskin2020,
    Korda2014,
    Lasserre2008,
    Streif2013}).
Their key advantage is that the bounds they imply can be optimized by solving a \emph{linear} program over differentiable functions, even though the original dynamics are nonlinear.
For systems governed by polynomial differential equations, moreover, optimal bounds can be approximated with arbitrary accuracy using polynomial optimization if (i) the initial state statistics and the observable of interest are described by polynomials, and (ii) the system evolves in a compact semialgebraic set satisfying a mild technical condition.

Finally, we stress that many of the ideas in this paper have already appeared in the broad literature on polynomial optimization for nonlinear dynamics. In particular, the approach we present here is exactly dual to `occupation measure' relaxations used in~\cite{Streif2014} for parameter identification. All results we present could be derived from those in that work via convex duality. Here, however, we present a direct and much more elementary derivation of the method in the (slightly different) context of uncertainy propagation. 



\section{Problem statement}
\label{ss:deterministic}

We consider deterministic dynamical systems governed by the ordinary differential equation (ODE)
\begin{equation}\label{e:ode}
    \smallddt {\vec{x}}=\vec{f}(t,\vec{x}), \quad \vec{x}(0)=\vec{x}_0.
\end{equation}
Here $\vec{x}(t) \in \mathbb{R}^n$ is the state of the system at time $t$, $\vec{x}_0$ is the state at the initial time (taken as zero without loss of generality), and the vector field  $\vec{f}\ :\ \mathbb{R}\times \mathbb{R}^n \rightarrow \mathbb{R}^n$ is such that solutions are unique and exist for all positive times. The solution of \cref{e:ode} at time $t$ is denoted throughout by $\vec{x}(t;\vec{x}_0)$.

{We will assume that the initial condition $\vec{x}_0$ is a random variable, whose probability distribution $\mu_0$ is supported on a set $\mathcal{X}_0 \subseteq \mathbb{R}^n$ and is known only through expectation bounds on a given vector-valued function $\conFun: \mathbb{R}^n \to \mathbb{R}^m$. In other words, we only know that}
\begin{equation}\label{e:setup:initial-stats}
    \E_{\mu_0}(\conFun)
    := \int_{\mathcal{X}_0}  \conFun(\vec{x}_0) \, \rd\mu_0(\vec{x}_0)
    \leq \vec{c}
\end{equation}
for some fixed vector $\vec{c} \in \mathbb{R}^m$ (the integral and the inequality act element-wise). For example, if one knows bounds on the mean and covariance of $\mu_0$, then one may take $\conFun$ to list all monomials in $\vec{x}_0$ of degree $2$ or less. Of course, {we always have $\E_{\mu_0}(1) = 1$ since} $\mu_0$ is a probability measure.

Given \cref{e:setup:initial-stats}, a time $T>0$, and  an \emph{observable} $\objFun:\mathbb{R}^n \to \mathbb{R}$, we seek bounds on the expected value $\E_{\mu_0}[\objFun(\vec{x}(T; \vec{x}_0))]$ calculated using \emph{any} probability measure $\mu_0$ on $\mathcal{X}_0$ consistent with \cref{e:setup:initial-stats}.
{Precisely, if $\Prob(\mathcal{X}_0)$ is the set of probability measures supported on $\mathcal{X}_0$,} we seek upper bounds on
%
\begin{equation}\label{e:setup:max-expectation}
    p^\star :=
    \sup_{
        \substack{
        \mu_0 \in \Prob(\mathcal{X}_0)\\
        \int_{\mathcal{X}_0}  \conFun \, \rd\mu_0 \leq \vec{c}
    }}
    \E_{\mu_0}[g(\vec{x}(T; \vec{x}_0))].
\end{equation}
{Lower bounds on the infimum of the same expectation under the same constraints can be deduced by negating upper bounds on the supremum of $\E_{\mu_0}[-\objFun(\vec{x}(T;\vec{x}_0)]$.}

If the expectation constraints~\cref{e:setup:initial-stats} are dropped from~\cref{e:setup:max-expectation}, the problem reduces to finding the initial condition $\vec{x}_0 \in \mathcal{X}_0$ that maximizes the value of $\objFun$ at time $T$. This problem and some variations related to safety analysis were studied in~\cite{Fantuzzi2020,Miller2020}.
{On the other hand, one could add constraints on the distribution of the state at intermediate times $t_1<\cdots<t_k$. This case was studied in~\cite{Streif2014} from the dual perspective of occupation measures, and our discussion carries through with straightforward changes. It is also immediate to replace the inequalities in \cref{e:setup:initial-stats} with a mix of equalities and inequalities. We focus on inequalities to ease the presentation.}

{Finally, observe that since we have assumed that solutions to the ODE~\cref{e:ode} exist for all positive times, all trajectories starting from $\mathcal{X}_0$ will remain in some set $\mathcal{X} \subseteq \mathbb{R}^n$ up to time $T$. We will assume that such a set has been fixed, allowing for the choice $\mathcal{X}=\mathbb{R}^n$ if no smaller set is known.}

\section{Upper bounds on maximal expectations}
\label{ss:bounds}

{Having set up the problem, we now derive \emph{a priori} upper bounds on the maximal expectation $p^\star$. Let $\mathbb{R}_+^m$ be the cone of $m$-dimensional vectors with nonnegative entries and let $C^1(D)$ be the space of continuously differentiable functions on a domain~$D$. For every $v \in C^1([0,T]\times \mathcal{X})$ set
\begin{equation}\label{e:Lie-derivative}
    \mathcal{L} v(t,\vec{x}) := \partial_t v(t,\vec{x}) + \vec{f}(t,\vec{x}) \cdot \nabla_{\vec{x}} v(t,\vec{x}).
\end{equation}
We begin with an elementary observation.
}
\begin{proposition}\label{prop:lyapunov-conditions}
    Suppose there exist a scalar $\alpha \in \mathbb{R}$, a vector $\boldsymbol{\beta} \in \mathbb{R}_+^m$, and a function $v \in C^1([0,T]\times \mathcal{X})$ such that
    \begin{subequations}
        \begin{align}
        \label{eqn:line-2}
        \mathcal{L} v(t,\vec{x}) &\leq 0 &&\forall(t,\vec{x}) \in [0,T] \times \mathcal{X},\\
        \objFun(\vec{x}) &\leq v(T,\vec{x})
        &&\forall \vec{x} \in \mathcal{X},
        \label{eqn:line-1}\\
        v(0,\vec{x}) &\leq \alpha + \boldsymbol{\beta} \cdot \conFun(\vec{x})
        &&\forall \vec{x} \in \mathcal{X}_0.
        \label{eqn:line-3}
        \end{align}
    \end{subequations}
    Then, for every $\mu_0\in\Prob(\mathcal{X}_0)$ satisfying~\cref{e:setup:initial-stats} there holds
        \begin{equation}
        \label{e:upper-bound}
        \E_{\mu_0}[\objFun(\vec{x}(T; \vec{x}_0))] \leq \alpha + \boldsymbol{\beta} \cdot \vec{c}.
        \end{equation}
\end{proposition}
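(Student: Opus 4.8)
The plan is to treat $v$ as a Lyapunov-type certificate and exploit the three hypotheses exactly as they are designed: \cref{eqn:line-2} forces $v$ to decrease along trajectories, \cref{eqn:line-1} makes $v$ dominate the observable at the final time, and \cref{eqn:line-3} has $v$ dominated by the affine template $\alpha + \boldsymbol{\beta}\cdot\conFun$ at the initial time. Chaining these facts will give a \emph{pointwise} bound on $\objFun(\vec{x}(T;\vec{x}_0))$ valid for every fixed $\vec{x}_0\in\mathcal{X}_0$, after which integrating against any admissible $\mu_0$ delivers the claim.

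First I would fix an arbitrary $\vec{x}_0\in\mathcal{X}_0$ and study the scalar map $t\mapsto v(t,\vec{x}(t;\vec{x}_0))$ on $[0,T]$. Since $v\in C^1$ and $\vec{x}(\cdot;\vec{x}_0)$ solves \cref{e:ode}, the chain rule gives
\begin{equation*}
    \smallddt\, v(t,\vec{x}(t;\vec{x}_0)) = \partial_t v + \vec{f}\cdot\nabla_{\vec{x}} v = \mathcal{L}v(t,\vec{x}(t;\vec{x}_0)),
\end{equation*}
which is nonpositive by \cref{eqn:line-2} because the trajectory stays in $\mathcal{X}$ up to time $T$. Hence $v$ is nonincreasing along the trajectory, so $v(T,\vec{x}(T;\vec{x}_0))\le v(0,\vec{x}_0)$. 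Combining this with \cref{eqn:line-1} at the endpoint and \cref{eqn:line-3} at the starting point yields the chain
\begin{equation*}
    \objFun(\vec{x}(T;\vec{x}_0)) \le v(T,\vec{x}(T;\vec{x}_0)) \le v(0,\vec{x}_0) \le \alpha + \boldsymbol{\beta}\cdot\conFun(\vec{x}_0).
\end{equation*}

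To finish, I would integrate this pointwise inequality against any $\mu_0\in\Prob(\mathcal{X}_0)$ satisfying \cref{e:setup:initial-stats}. Linearity of the expectation gives $\E_{\mu_0}[\objFun(\vec{x}(T;\vec{x}_0))]\le \alpha + \boldsymbol{\beta}\cdot\E_{\mu_0}(\conFun)$. The one genuinely load-bearing step is the last one: because $\boldsymbol{\beta}\in\mathbb{R}_+^m$ has nonnegative entries, the componentwise bound $\E_{\mu_0}(\conFun)\le\vec{c}$ is preserved under the inner product with $\boldsymbol{\beta}$, so $\boldsymbol{\beta}\cdot\E_{\mu_0}(\conFun)\le\boldsymbol{\beta}\cdot\vec{c}$, which is precisely \cref{e:upper-bound}. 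This is the only place where the sign restriction on $\boldsymbol{\beta}$ enters, and it is essential.

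I do not expect a serious obstacle, as the argument is elementary. The only points needing mild care are the measurability and integrability of $\vec{x}_0\mapsto\objFun(\vec{x}(T;\vec{x}_0))$, which are harmless since the pointwise bound dominates it by the $\mu_0$-integrable function $\alpha+\boldsymbol{\beta}\cdot\conFun$ (so the expectation is well defined and the inequality survives), and the implicit use of the standing assumption that trajectories from $\mathcal{X}_0$ remain in $\mathcal{X}$, without which \cref{eqn:line-2,eqn:line-1} could not be invoked along the trajectory.
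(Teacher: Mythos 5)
Your proposal is correct and follows essentially the same argument as the paper's proof: the chain rule plus \cref{eqn:line-2} gives monotonicity of $v$ along trajectories, the three hypotheses are chained into the pointwise bound $\objFun(\vec{x}(T;\vec{x}_0)) \le \alpha + \boldsymbol{\beta}\cdot\conFun(\vec{x}_0)$, and integration against $\mu_0$ finishes the proof. Your explicit remarks on where the sign condition $\boldsymbol{\beta}\in\mathbb{R}_+^m$ enters and on integrability are details the paper leaves implicit, but they do not change the route.
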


\begin{proof}
    {Given $\vec{x}_0 \in \mathcal{X}_0$, let $\vec{x}(t; \vec{x}_0)$ solve the ODE \cref{e:ode}. A straightforward application of the chain rule shows that
    $\mathcal{L} v(t, \vec{x}(t,\vec{x}_0) ) = \smallddt v(t, \vec{x}(t;\vec{x}_0))$, 
    so the function $v(t,\vec{x}(t;\vec{x}_0))$ does not increase over the time interval $[0,T]$ by virtue of \cref{eqn:line-2}.} Inequalities~\cref{eqn:line-1,eqn:line-3} then imply
    $
        \objFun(\vec{x}(T; \vec{x}_0))
        \leq
        v(T, \vec{x}(T; \vec{x}_0))
        \leq
        v(0, \vec{x}_0)
        \leq
        \alpha + \boldsymbol{\beta} \cdot \conFun(\vec{x}_0).
    $
    Taking expectations with respect to the distribution $\mu_0$ of the initial condition $\vec{x}_0$ and {using} \cref{e:setup:initial-stats} yields \cref{e:upper-bound}.
\end{proof}


{Maximizing the left-hand side of \cref{e:upper-bound} over all probability measures $\mu_0$ consistent with \cref{e:setup:initial-stats} shows that $p^\star\leq \alpha+\boldsymbol{\beta}\cdot\vec{c}$. Minimizing this upper bound gives the following result.}

\begin{theorem}
    There holds
    \begin{equation}\label{e:weak-duality}
        p^\star \leq \inf_{
            \substack{
                \alpha \in \mathbb{R},\,
                \boldsymbol{\beta} \in \mathbb{R}_+^m\\
                v \in C^1([0,T]\times \mathcal{X})\\
                \text{s.t. \cref{eqn:line-1,eqn:line-2,eqn:line-3}}
            }
        } \left\{\alpha + \boldsymbol{\beta}\cdot \vec{c} \right\}
        =: d^\star.
    \end{equation}
\end{theorem}

{The minimization problem for $d^\star$ is the Lagrangian dual of the occupation measure relaxation of the maximization defining $p^\star$ in~\cref{e:setup:max-expectation} from~\cite{Streif2014}. The advantage of considering the dual problem is that \emph{any} feasible $v$, $\alpha$ and $\boldsymbol{\beta}$ produces a bound on $p^\star$. In contrast, the occupation measure relaxation must be solved exactly to obtain a bound.}

Inequality \cref{e:weak-duality} expresses a weak duality between the maximal expectation problem \cref{e:setup:max-expectation} and the minimization problem for $d^\star$. This duality is in fact strong, meaning that $d^\star=p^\star$, if the sets $\mathcal{X}_0$ and $\mathcal{X}$ are compact.

\begin{theorem}\label{th:strong-duality}
    If $\mathcal{X}_0$ and $\mathcal{X}$ are compact, $p^\star = d^\star$.
\end{theorem}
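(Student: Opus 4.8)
The plan is to recognize the program defining $d^\star$ as the dual of an infinite-dimensional linear program over measures — the occupation-measure relaxation of $p^\star$ — and then to prove the two facts that together yield equality: (i) the primal measure program has the same value as $p^\star$, and (ii) there is no gap between that program and its dual. Combined with the weak duality $p^\star \le d^\star$ already established, these give $p^\star = d^\star$.

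For the setup I would introduce, alongside the initial law $\mu_0 \in \Prob(\mathcal{X}_0)$, a terminal measure $\mu_T \in \mathcal{M}_+(\mathcal{X})$ and an occupation measure $\omega \in \mathcal{M}_+([0,T]\times\mathcal{X})$, coupled by the weak form of the transport (Liouville) equation,
\begin{equation*}
\int_{\mathcal{X}} v(T,\vec{x})\,\rd\mu_T - \int_{\mathcal{X}_0} v(0,\vec{x})\,\rd\mu_0 = \int_{[0,T]\times\mathcal{X}} \mathcal{L} v \,\rd\omega \quad \forall\, v \in C^1([0,T]\times\mathcal{X}),
\end{equation*}
together with $\int \conFun \,\rd\mu_0 \le \vec{c}$, $\int \rd\mu_0 = 1$, and objective $\sup \int_{\mathcal{X}} \objFun \,\rd\mu_T$; call its value $p_M$. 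Pairing the three constraints with multipliers $\alpha\in\mathbb{R}$, $\boldsymbol{\beta}\in\mathbb{R}_+^m$ and $v\in C^1$ and collecting the coefficients of $\mu_T$, $\mu_0$ and $\omega$ reproduces exactly the three inequalities \cref{eqn:line-1,eqn:line-3,eqn:line-2} (boundedness of the sup over each nonnegative measure forces each coefficient to be $\le 0$) and the dual cost $\alpha+\boldsymbol{\beta}\cdot\vec{c}$; hence the dual of this measure program is precisely the program defining $d^\star$.

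Next I would establish exactness, $p_M = p^\star$. The inequality $p_M \ge p^\star$ is immediate: any admissible $\mu_0$ gives a feasible triple on taking $\mu_T$ to be the push-forward of $\mu_0$ under the time-$T$ flow and $\omega$ the $\mu_0$-average of the trajectory occupation measures, which satisfies the transport identity by the fundamental theorem of calculus and attains the same objective. The reverse inequality $p_M \le p^\star$ is the substantive direction: since solutions of \cref{e:ode} are unique, the method of characteristics (superposition principle for the continuity equation) forces any feasible triple to be transported along trajectories, so $\mu_T$ is necessarily the push-forward of its $\mu_0$ under the flow and the objective equals $\E_{\mu_0}[\objFun(\vec{x}(T;\vec{x}_0))]$ for an admissible $\mu_0$. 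Compactness of $\mathcal{X}_0$ and $\mathcal{X}$ is used to keep all measures finite and tight and to guarantee trajectories stay in $\mathcal{X}$.

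The main obstacle is step (ii), the absence of a duality gap, $p_M = d^\star$, which is where compactness is truly essential. Testing the transport identity against $v\equiv 1$ and $v(t,\vec x)=t$ shows $\mu_0,\mu_T$ have unit mass and $\omega$ has mass $T$, so by Banach--Alaoglu the feasible set of triples is weak-$*$ compact and the linear objective is weak-$*$ continuous; the primal supremum is therefore attained, and zero gap follows from the standard strong-duality theorem for conic linear programs once the range of the constraint map is shown to be closed (or a Slater point exhibited). The delicate technical point is that the abstract dual optimizer must be produced within $C^1$ rather than merely $C^0$: the natural candidate is the value function $v(t,\vec{x})=\objFun(\vec{x}(T;t,\vec{x}))$, which satisfies $\mathcal{L}v=0$ and $v(T,\cdot)=\objFun$ but need not be differentiable under the paper's minimal regularity. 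I expect the resolution to be a mollification argument, approximating the continuous dual variables by $C^1$ functions while controlling the induced perturbations of \cref{eqn:line-1,eqn:line-2,eqn:line-3} using uniform continuity on the compact domain.
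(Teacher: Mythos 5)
Your overall architecture --- an occupation-measure LP, exactness of that LP via uniqueness/superposition, then a duality argument --- is the alternative route the paper itself alludes to (``either a minimax argument or an abstract strong duality theorem''), and your step (i) is essentially the paper's \cref{lemma:liouville-characterization} and \cref{prop:max-expectation-measures}: the easy direction by pushing $\mu_0$ forward along the flow, the substantive direction by showing any feasible triple is supported on trajectories (the paper does this more carefully, first disintegrating $\mu(\rd t,\rd\vec{x})=\mu_t(\rd\vec{x})\,\rd t$ after checking the $t$-marginal is Lebesgue, then invoking the Liouville-equation characterization). The genuine gap is in your step (ii). You reduce the whole theorem to ``the standard strong-duality theorem for conic linear programs once the range of the constraint map is shown to be closed (or a Slater point exhibited)'', but you verify neither hypothesis, and that verification \emph{is} the substance of the step: closedness of the image cone of the constraint map (valued in $\mathbb{R}^{1+m}\times C^1(\Omega)^*$) must be extracted from compactness of $\mathcal{X}_0$ and $\mathcal{X}$ by a weak-$*$ subsequence argument applied to arbitrary sequences in that cone, not merely to feasible triples, whose masses you normalized; and a Slater point for the measure-side program is unavailable because cones of nonnegative measures have empty interior in the relevant topologies. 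The paper avoids this issue entirely with Sion's minimax theorem \cite{Sion1958}: compactness of $\mathcal{X}_0$ and $\mathcal{X}$ makes $\mathbb{X}=\Prob(\mathcal{X}_0)\times T\Prob(\Omega)\times\Prob(\mathcal{X})$ weak-$*$ compact and convex, the Lagrangian is bilinear and continuous, so $\sup\inf=\inf\sup$; the inner suprema are then computed explicitly over Dirac measures, and the resulting max terms are absorbed into $v$ via the shift $v'(t,\vec{x})=v(t,\vec{x})+\max_{\mathcal{X}}\left[\objFun-v(T,\cdot)\right]+(T-t)\max_{\Omega}\mathcal{L}v$, which lands exactly on the program defining $d^\star$.

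Separately, the point you flag as ``the delicate technical point'' --- producing a dual optimizer in $C^1(\Omega)$ --- is a non-issue and signals a misunderstanding of what must be proven. Strong duality asserts equality of the two optimal \emph{values}; it does not assert, and in this problem one should not expect, attainment of the infimum defining $d^\star$. Consequently the value function $v(t,\vec{x})=\objFun(\vec{x}(T;t,\vec{x}))$ and its mollification are irrelevant to $p^\star=d^\star$, and no smoothing argument of that kind can substitute for the unverified closedness (or interior-point) condition on which your proof actually rests. (A perturbation-to-strict-inequality argument of the flavour you describe does appear in the paper, but in the sketch of \cref{th:sos-convergence-compact}, where Putinar's Positivstellensatz requires strict positivity --- not in the proof of \cref{th:strong-duality}.)
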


This result can be proven by applying either a minimax argument or an abstract strong duality theorem (e.g., \cite[Th.~C.20]{Lasserre2009book}) to the occupation measure relaxation of~\cite{Streif2014}, which evaluates $p^\star$ exactly if $\mathcal{X}_0$ and $\mathcal{X}$ are compact. This statement is not proven explicitly in~\cite{Streif2014}, although the argument is similar to proofs found in \cite{Lasserre2008,Korda2014}. We provide the details in \cref{s:proofs} for the benefit of non-expert readers. First, however, we show how to optimize bounds in practice and showcase the method on examples.

\section{Implementation via polynomial optimization} \label{s: sos}

Computing the upper bound $d^\star$ on the maximal expectation $p^\star$ requires solving the infinite-dimensional linear program in \cref{e:weak-duality}, which is not easy in general. However, upper bounds on $d^\star$ can be computed via semidefinite programming if the ODE vector field $\vec{f}$ is polynomial, the observable function $g$ is also polynomial, and the sets $\mathcal{X}$ and $\mathcal{X}_0$ are defined by a finite number of polynomial inequalities. In this case, one can optimize polynomial auxiliary functions $v$ of fixed degree by strengthening the inequalities \cref{eqn:line-1,eqn:line-2,eqn:line-3} into stronger but tractable sum-of-squares polynomial constraints.

\subsection{Preliminaries}
Let $\mathbb{R}[\vec{x}]$ be the space of polynomials with $\vec{x} \in \mathbb{R}^n$ as the independent variable, and let $\Sigma[\vec{x}]$ be the subset of polynomials that are sums of squares of other polynomials. Polynomials in $\Sigma[\vec{x}]$ are trivially nonnegative on $\mathbb{R}^n$; the converse is generally false except for univariate, quadratic, or bivariate quartic polynomials \cite{Hilbert1888}. To consider only polynomials with total degree less than or equal to $\omega$ we write $\mathbb{R}_\omega[\vec{x}]$ and $\Sigma_{\omega}[\vec{x}]$. Analogous notation (e.g., $\mathbb{R}[t,\vec{x}]$) is used for polynomials that depend on $t$ and $\vec{x}$.

A set $\mathcal{S}\subset\mathbb{R}^n$ is called \emph{semialgebraic} if
\begin{equation*}
    \mathcal{S} := \left\{ \vec{x} \in \mathbb{R}^n: \phi_1(\vec{x}) \geq 0, \, \ldots,\, \phi_p(\vec{x}) \geq 0\right\}
\end{equation*}
for $\phi_1,\ldots,\phi_p \in \mathbb{R}[\vec{x}]$.
The associated \emph{quadratic module} $\mathcal{Q}(\mathcal{S})$ is the set of weighted sum-of-squares polynomials, where the weights are $1$ and the polynomials defining $\mathcal{S}$:
\begin{equation*}
    \mathcal{Q}(\mathcal{S}) := \left\{ \sigma_0 + \sigma_1\phi_1 + \cdots + \sigma_p \phi_p: \sigma_0,\ldots,\sigma_p \in \Sigma[\vec{x}]  \right\}.
\end{equation*}
Its degree-$\omega$ subset is denoted by $\mathcal{Q}_\omega(\mathcal{S}) := \mathcal{Q}(\mathcal{S}) \cap \mathbb{R}_\omega[\vec{x}]$.
Evidently, polynomials in $\mathcal{Q}(\mathcal{S})$ are nonnegative on $\mathcal{S}$. A partial converse result due to Putinar~\cite{Putinar1993} states that every polynomial positive on $\mathcal{S}$ belongs to  $\mathcal{Q}(\mathcal{S})$ if $\mathcal{S}$ satisfies the \emph{Archimedean condition}, i.e., if there exists $r \in \mathbb{R}$ such that
$r^2 - x_1^2 - \cdots - x_n^2 \in \mathcal{Q}(\mathcal{S})$.
This requires $\mathcal{S}$ to be compact and can be ensured for a compact $\mathcal{S}$ by adding to its semialgebraic definition the inequality $r^2 - x_1^2 - \cdots - x_n^2 \geq 0$ with large enough $r$. However, finding such $r$ may be hard.

\subsection{Upper bounds on \texorpdfstring{$d^\star$}{d*} with sum-of-squares constraints}
We now use the definitions given above to compute upper bounds on $d^\star$ using semidefinite programming. Suppose there exists $\omega_0$ such that each component of the vector field $\vec{f}$ of the ODE \cref{e:ode} belongs to $\mathbb{R}_{\omega_0}[t,\vec{x}]$. Suppose also the observable function $g$ is in $\mathbb{R}_{\omega_0}[\vec{x}]$. Finally, suppose the sets
\begin{align*}
    \mathcal{X}_0 &:= \left\{\vec{x} \in \mathbb{R}^n:\; \phi_1(\vec{x})\geq0,\, \ldots,\, \phi_p(\vec{x})\geq 0 \right\}
    \\
    \mathcal{X} &:= \left\{\vec{x} \in \mathbb{R}^n:\; \psi_1(\vec{x})\geq0,\, \ldots,\, \psi_q(\vec{x})\geq 0 \right\}
\end{align*}
are semialgebraic with $\phi_1,\ldots,\phi_p,\psi_1,\ldots,\psi_q\in \mathbb{R}_{\omega_0}[\vec{x}]$. (If $\mathcal{X}= \mathbb{R}^n$ take $q=1$ and $\psi_1(\vec{x})\equiv 1$; the same holds for $\mathcal{X}_0$.) The set $\Omega := [0,T]\times \mathcal{X}$ is then also semialgebraic, as
\begin{equation*}
    \Omega = \left\{ (t,\vec{x}):\; t(T-t)\geq 0, \, \psi_1(\vec{x}) \geq 0,\, \ldots,\,\psi_q(\vec{x})\geq 0 \right\},
\end{equation*}
and it satisfies the Archimedean condition if so does $\mathcal{X}$.

Given these assumptions, consider the minimization problem defining $d^\star$ in \cref{e:weak-duality}. If  $v$ is restricted to be a polynomial of fixed degree $\omega$, then the constraints \cref{eqn:line-1,eqn:line-2,eqn:line-3} can be rearranged as finite-degree polynomial nonnegativity conditions on the sets $\Omega$, $\mathcal{X}$ and $\mathcal{X}_0$, respectively. These are intractable in general, but can be strengthened into the sufficient (weighted) sum-of-squares constraints
\begin{subequations}
        \begin{align}
        \label{eqn:sos-2}
        -\mathcal{L} v &\in \mathcal{Q}_\omega(\Omega),\\
        \label{eqn:sos-1}
        v(T,\cdot) - \objFun &\in\mathcal{Q}_\omega(\mathcal{X}),
        \\
        \label{eqn:sos-3}
        \alpha + \boldsymbol{\beta} \cdot \conFun - v(0,\cdot) &\in \mathcal{Q}_\omega(\mathcal{X}_0).
    \end{align}
\end{subequations}
We therefore conclude the following result.
\begin{proposition}
    Suppose $\vec{f}$, $g$, $\mathcal{X}$ and $\mathcal{X}_0$ satisfy the assumptions at the start of this subsection. For every integer $\omega$,
    \begin{equation}\label{e:sos-bound}
        p^\star \leq d^\star \leq \inf_{
                \substack{
                    \alpha \in \mathbb{R},\,
                    \boldsymbol{\beta} \in \mathbb{R}_+^m\\
                    v \in \mathbb{R}_\omega[t,\vec{x}]\\
                    \text{s.t. \cref{eqn:sos-1,eqn:sos-2,eqn:sos-3}}
                }
            } \left\{\alpha + \boldsymbol{\beta}\cdot \vec{c} \right\}
            =: d_\omega^\star.
    \end{equation}
\end{proposition}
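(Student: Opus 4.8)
The plan is to establish the right-hand inequality $d^\star \le d_\omega^\star$ by a feasible-set inclusion argument; the left-hand inequality $p^\star \le d^\star$ needs no new work, as it is precisely the weak-duality statement already recorded in \cref{e:weak-duality}. Concretely, I would show that every triple $(\alpha,\boldsymbol{\beta},v)$ feasible for the sum-of-squares problem defining $d_\omega^\star$ is also feasible for the infinite-dimensional minimization defining $d^\star$. Since both problems carry the identical objective $\alpha + \boldsymbol{\beta}\cdot\vec{c}$ and identical sign constraint $\boldsymbol{\beta}\in\mathbb{R}_+^m$, an inclusion of feasible sets forces the infimum over the smaller (sum-of-squares) set to dominate the infimum over the larger one, which is exactly $d^\star \le d_\omega^\star$.

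The argument rests on two elementary observations. First, polynomials are continuously differentiable, so the restriction $v \in \mathbb{R}_\omega[t,\vec{x}]$ automatically places $v \in C^1([0,T]\times\mathcal{X})$, as demanded by the $d^\star$ problem. Second, by the very definition of the quadratic module, any polynomial in $\mathcal{Q}(\mathcal{S})$ is nonnegative on $\mathcal{S}$: it is a sum of terms $\sigma_i\phi_i$ with each $\sigma_i$ a sum of squares and each $\phi_i$ nonnegative on $\mathcal{S}$, so every summand is nonnegative there. Applying this fact to the three membership conditions is what converts them into the pointwise inequalities required by \cref{eqn:line-1,eqn:line-2,eqn:line-3}.

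I would then verify the three constraints in turn. Membership $-\mathcal{L}v \in \mathcal{Q}_\omega(\Omega)$ gives $-\mathcal{L}v \ge 0$ on $\Omega = [0,T]\times\mathcal{X}$, which is \cref{eqn:line-2}; membership $v(T,\cdot)-\objFun \in \mathcal{Q}_\omega(\mathcal{X})$ gives $\objFun(\vec{x}) \le v(T,\vec{x})$ on $\mathcal{X}$, which is \cref{eqn:line-1}; and membership $\alpha + \boldsymbol{\beta}\cdot\conFun - v(0,\cdot) \in \mathcal{Q}_\omega(\mathcal{X}_0)$ gives $v(0,\vec{x}) \le \alpha + \boldsymbol{\beta}\cdot\conFun(\vec{x})$ on $\mathcal{X}_0$, which is \cref{eqn:line-3}. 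Together with $v \in C^1$ and $\boldsymbol{\beta}\in\mathbb{R}_+^m$, this shows the triple is feasible for the $d^\star$ problem, and the proposition follows.

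I do not expect a substantial obstacle: the statement is in essence the bookkeeping observation that the sum-of-squares tightening can only shrink the feasible set, so it cannot decrease the optimal value. The one point deserving explicit care is to invoke the semialgebraic description of $\Omega$ given just before the proposition, so that it is clear the members of $\mathcal{Q}_\omega(\Omega)$ are nonnegative on $[0,T]\times\mathcal{X}$ and not on some unrelated set; the Archimedean property plays no role in this direction of the inequality and need not be used here.
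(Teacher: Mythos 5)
Your proposal is correct and takes essentially the same route as the paper, which proves this proposition through the discussion immediately preceding it: membership in the quadratic modules $\mathcal{Q}_\omega(\Omega)$, $\mathcal{Q}_\omega(\mathcal{X})$, $\mathcal{Q}_\omega(\mathcal{X}_0)$ is a sufficient condition for the pointwise inequalities \cref{eqn:line-2,eqn:line-1,eqn:line-3}, and restricting $v$ to polynomials of degree $\omega$ only shrinks the feasible set of the $d^\star$ problem, so the infimum can only increase. Your feasible-set-inclusion argument, including the remarks that polynomials are $C^1$ and that the Archimedean condition is not needed for this direction, is exactly the intended bookkeeping.
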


Optimization problems with weighted SOS constraints, such as \cref{e:sos-bound}, can be recast as semidefinite programs~\cite{Laurent2009,Lasserre2015,Parrilo2013book}. In principle, therefore, each upper bound $d_\omega^\star$ can be calculated via semidefinite programming. Barring issues with numerical conditioning, current software can handle problems where the polynomial degree $\omega$ and the dimension $n$ of the state variable $\vec{x}$ 
satisfy $\binom{n+\lfloor\omega/2\rfloor}{\lfloor\omega/2\rfloor}\leq 1000$ or so.

\subsection{Convergence in the compact case}
Since $\mathbb{R}_\omega[t,\vec{x}] \subset \mathbb{R}_{\omega+1}[t,\vec{x}]$ and $\mathcal{Q}_\omega(\cdot) \subset \mathcal{Q}_{\omega+1}(\cdot)$, the upper bounds $d_\omega^\star$ form a nonincreasing sequence as $\omega$ is raised. This sequence is bounded below by $d^\star$, so it has a limit $d_\infty$, but it is not known if $d_\infty=d^\star$ in general. One case in which this equality holds is when the sets $\mathcal{X}$ and $\mathcal{X}_0$ are compact and satisfy the Archimedean condition. Combined with \cref{th:strong-duality}, we obtain our last theoretical result.

\begin{theorem}\label{th:sos-convergence-compact}
    Suppose that $\mathcal{X}$ and $\mathcal{X}_0$ satisfy the Archimedean condition. Then, $p^\star = d^\star = \lim_{\omega \to \infty} d_\omega^\star$.
\end{theorem}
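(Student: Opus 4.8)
The plan is to leverage the two facts already at our disposal. First, the Archimedean condition forces $\mathcal{X}$ and $\mathcal{X}_0$ to be compact, so \cref{th:strong-duality} already gives $p^\star = d^\star$. Second, as noted above, the bounds $d_\omega^\star$ form a nonincreasing sequence bounded below by $d^\star$, with limit $d_\infty \geq d^\star$. It therefore suffices to prove the reverse inequality $d_\infty \leq d^\star$; concretely, I would show that for every $\epsilon > 0$ there exist a finite degree $\omega$ and a triple $(\alpha,\boldsymbol{\beta},v)$ with $v \in \mathbb{R}_\omega[t,\vec{x}]$ that is feasible for the SOS problem \cref{e:sos-bound} and satisfies $\alpha + \boldsymbol{\beta}\cdot\vec{c} \leq d^\star + \epsilon$.

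\emph{Step 1 (strictification).} By the definition of $d^\star$ in \cref{e:weak-duality}, pick a $C^1$-feasible triple $(\alpha,\boldsymbol{\beta},v)$ with $\alpha + \boldsymbol{\beta}\cdot\vec{c} \leq d^\star + \epsilon/2$. I would then perturb it to create strictly positive slack in all three inequalities. Since $\mathcal{L}t = 1$, setting $\tilde{v} := v - \gamma t + (\gamma T + \delta)$ and $\tilde\alpha := \alpha + \gamma T + 2\delta$ for small $\gamma,\delta>0$ yields $\mathcal{L}\tilde{v} \leq -\gamma$ on $\Omega$, $\tilde{v}(T,\cdot) - g \geq \delta$ on $\mathcal{X}$, and $\tilde\alpha + \boldsymbol{\beta}\cdot\conFun - \tilde{v}(0,\cdot) \geq \delta$ on $\mathcal{X}_0$, while the objective grows only by $\gamma T + 2\delta$, which I choose below $\epsilon/2$.

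\emph{Step 2 (polynomial approximation).} Next I would replace $\tilde v$ by a nearby polynomial. Extending $\tilde v$ to a $C^1$ function on a closed box containing the compact set $[0,T]\times\mathcal{X}$, and using the density of polynomials in the $C^1$-norm, I obtain a polynomial $\hat v$ with $\|\hat v - \tilde v\|_{C^1}$ as small as desired. Because $\vec f$ is continuous, it is bounded on the compact domain, so $\mathcal{L}\hat v$ differs from $\mathcal{L}\tilde v$ by at most a constant multiple of $\|\hat v - \tilde v\|_{C^1}$; choosing the approximation fine enough preserves all three strict inequalities $\mathcal{L}\hat v < 0$, $\hat v(T,\cdot) - g > 0$, and $\tilde\alpha + \boldsymbol\beta\cdot\conFun - \hat v(0,\cdot) > 0$ on the respective sets, without changing the objective $\tilde\alpha + \boldsymbol\beta\cdot\vec c \leq d^\star + \epsilon$.

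\emph{Step 3 (Positivstellensatz).} Finally, since $\mathcal{X}$, $\mathcal{X}_0$, and hence also $\Omega = [0,T]\times\mathcal{X}$ satisfy the Archimedean condition, Putinar's theorem \cite{Putinar1993} places each of the three strictly positive polynomials in the corresponding quadratic module $\mathcal{Q}(\Omega)$, $\mathcal{Q}(\mathcal{X})$, $\mathcal{Q}(\mathcal{X}_0)$, and therefore in $\mathcal{Q}_{\omega}$ for some finite degree. Taking $\omega$ larger than $\deg\hat v$ and these three degrees makes $(\tilde\alpha,\boldsymbol\beta,\hat v)$ feasible for \cref{e:sos-bound}, giving $d_\omega^\star \leq d^\star + \epsilon$ and hence $d_\infty \leq d^\star$. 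Combined with $d_\infty \geq d^\star = p^\star$, this proves the claim. I expect the main obstacle to be Step 2: unlike the $C^0$ Stone--Weierstrass approximation used in many such arguments, here the Lie-derivative constraint genuinely involves $\nabla_{\vec x} v$, so I need simultaneous uniform control of $\hat v$ and its first derivatives, i.e. density of polynomials in $C^1$. Making this rigorous requires first extending $\tilde v$ off the possibly irregular semialgebraic set $\mathcal{X}$ to a domain on which $C^1$-approximation by polynomials is standard (e.g. a box), which is where the technical care concentrates.
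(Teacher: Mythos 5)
Your proposal is correct and follows essentially the same route as the paper's (deliberately omitted) proof: use \cref{th:strong-duality} for $p^\star = d^\star$, then, for any $\epsilon>0$, replace a near-optimal $C^1$-feasible triple by one satisfying the constraints with strict inequality, approximate $v$ by a polynomial, and invoke Putinar's Positivstellensatz on the Archimedean sets $\Omega$, $\mathcal{X}$, $\mathcal{X}_0$ to get feasibility in \cref{e:sos-bound} at some finite degree $\omega$, so that $d_\omega^\star \leq d^\star + \epsilon$. Your explicit strictification $\tilde v = v - \gamma t + (\gamma T + \delta)$ and your insistence on $C^1$ (not merely uniform) polynomial approximation, with extension off the possibly irregular set $\mathcal{X}$, are precisely the details the paper delegates to the cited standard arguments.
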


The proof follows standard arguments (e.g., \cite{Fantuzzi2020,Korda2014,Lasserre2008}), so we omit it. Briefly, given any ${\epsilon>0}$, the compactness of $\mathcal{X}_0$ and $\mathcal{X}$ (hence, of $\Omega$) allows one to replace any $v$ satisfying \cref{eqn:line-1,eqn:line-2,eqn:line-3} with a polynomial $\tilde{v}$ satisfying \cref{eqn:line-1,eqn:line-2,eqn:line-3} with strict inequality at the expense of worsening the corresponding bound by no more than $\epsilon$.
Then, by Putinar's theorem, $\tilde{v}$ satisfies also \cref{eqn:sos-1,eqn:sos-2,eqn:sos-3} for large enough $\omega$. One can thus find $\omega$ such that $d_\omega^\star \leq d^\star + \varepsilon$, establishing \cref{th:sos-convergence-compact}.
\section{Examples} \label{s: cases}

To illustrate the uncertainty propagation framework discussed above, we now bound the expected time-$T$ state of the van der Pol oscillator and of the chaotic Lorenz system when the initial state has a prescribed mean $\boldsymbol{\mu}$ and covariance matrix $\Sigma$. In both examples we take $\mathcal{X}_0=\mathcal{X}=\mathbb{R}^n$. All computations use YALMIP \cite{lofberg2004yalmip,Lofberg2009sos} to recast the polynomial optimization problem for $d^\star_\omega$ into a semidefinite program, which is solved using MOSEK~\cite{mosek}.
In each example, we compare our bounds to expected state values approximated by simulating $10^6$ trajectories whose initial conditions is drawn from either the normal or the uniform distribution with mean $\boldsymbol{\mu}$ and covariance matrix $\Sigma$. (The ensamble mean and covariance matrices of the drawn set of initial conditions agree with the nominal values to within two decimal places.)

\begin{table}[t]
    \vskip5pt
    \caption{Lower and upper bounds (LB and UB) on the expected time-$T$ state of the van der Pol oscillator, computed with degree-$\omega$ polynomial $v$ and $\mu_0$ with mean and covariance matrix in~\cref{e:vdp-data}.}
    \centering
    \begin{tabular}{cc c rr c rr}
    \toprule
    &&&\multicolumn{2}{c}{$\E_{\mu_0}[x_1(T;\vec{x}_0)]$}&& \multicolumn{2}{c}{$\E_{\mu_0}[x_2(T;\vec{x}_0)]$}\\[0.5ex]
    $T$ & $\omega$ &&
    \multicolumn{1}{c}{LB} & 
    \multicolumn{1}{c}{UB} &&
    \multicolumn{1}{c}{LB} & 
    \multicolumn{1}{c}{UB} \\
    [0.75ex]
    %
    1 &  8 &&  0.2857 &  0.2917 &&  0.1087 &  0.1213 \\
    1 & 12 &&  0.2858 &  0.2916 &&  0.1087 &  0.1209 \\
    1 & 16 &&  0.2858 &  0.2916 &&  0.1087 &  0.1209 \\
    [0.5ex]
    %
    2 &  8 &&  0.2397 &  0.2621 && -0.2147 & -0.1832 \\
    2 & 12 &&  0.2411 &  0.2603 && -0.2082 & -0.1861 \\
    2 & 16 &&  0.2412 &  0.2603 && -0.2073 & -0.1862 \\
    [0.5ex]
    %
    3 &  8 && -0.1744 & -0.1103 && -0.6353 & -0.5251 \\
    3 & 12 && -0.1580 & -0.1269 && -0.6284 & -0.5700 \\
    3 & 16 && -0.1556 & -0.1281 && -0.6271 & -0.5781 \\
    [0.5ex]
    %
    4 &  8 && -0.5931 & -0.4945 && -0.1891 &  0.0213 \\
    4 & 12 && -0.5907 & -0.5538 && -0.0953 & -0.0127 \\
    4 & 16 && -0.5904 & -0.5615 && -0.0781 & -0.0200 \\
    [0.5ex]
    %
    5 &  8 && -0.4776 & -0.2698 &&  0.1056 &  0.4339 \\
    5 & 12 && -0.4457 & -0.3877 &&  0.2343 &  0.3251 \\
    5 & 16 && -0.4431 & -0.3931 &&  0.2508 &  0.3182 \\
    \bottomrule
    \label{tab:vdp-res}
    \end{tabular}
\end{table}

\begin{figure}[t]
    \centering 
    \includegraphics[width=0.99\linewidth, trim=0cm 0.25cm 0cm 0.8cm]{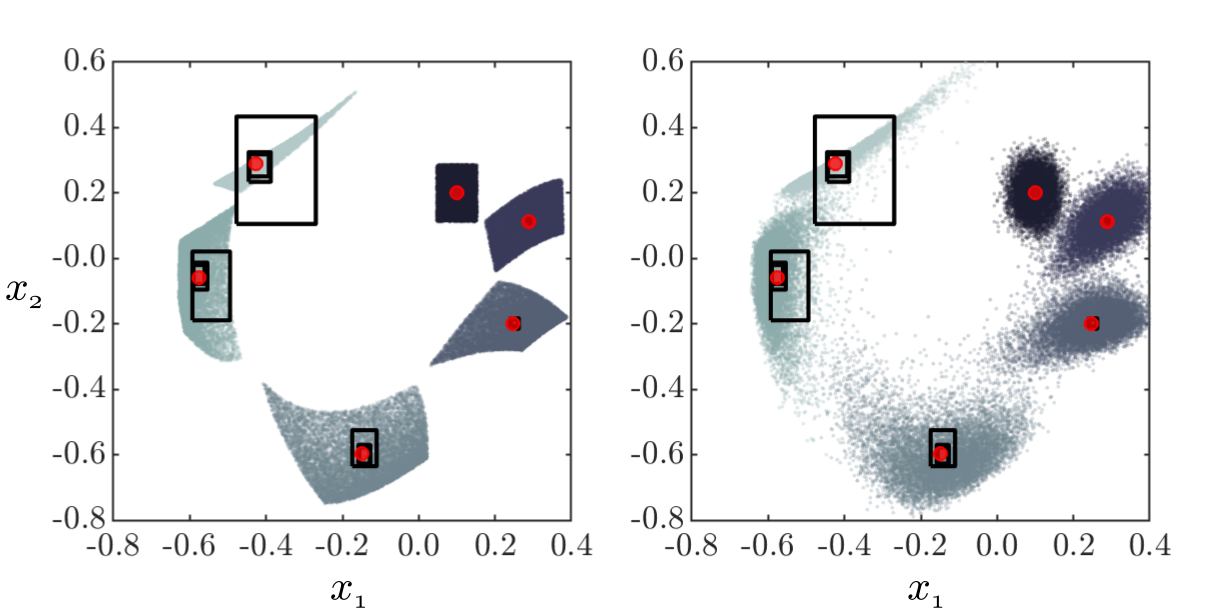}
    \caption{
        Expected state (\red{$\bullet$}) for the van der Pol oscillator at times
        $T=0$ (\dpone{$\bullet$}), 
        $1$ (\dptwo{$\bullet$}),
        $2$ (\dpthree{$\bullet$}),
        $3$ (\dpfour{$\bullet$}),
        $4$ (\dpfive{$\bullet$}) and 
        $5$ (\dpsix{$\bullet$}), approximated using $10^6$ initial conditions  drawn from a uniform (left) and a normal (right) distributions with the mean and covariance matrix in \cref{e:vdp-data}. Also shown are the state distribution at each time $T$. Boxes show the expectation bounds from \cref{tab:vdp-res}.
    \label{fig:vdp-res}}
\end{figure}

\subsection{The van der Pol oscillator}\label{ss:examples:vdp}
Let us consider the van der Pol oscillator
\begin{equation}\label{e:vdp}
    \ddt{} \begin{pmatrix}
    x_1\\
    x_2
    \end{pmatrix} =
    \begin{pmatrix}
    x_1\\
    (1-9x_1^2)x_2-x_1
    \end{pmatrix},
\end{equation}
which for numerical reasons was rescaled so its limit cycle lies in the unit box $[-1,1]^2$. We seek upper and lower bounds on the expected value of each state variable $x_i$ at time $T$ for initial state distributions $\mu_0$ with mean and covariance matrix
\begin{equation}
    \boldsymbol{\mu}= \begin{pmatrix} 0.1 \\ 0.2\end{pmatrix}
    \quad\text{and}\quad
    \Sigma =
    \begin{pmatrix}
    0.03^2 & 0 \\
    0 &  0.05^2
    \end{pmatrix}.
    \label{e:vdp-data}
\end{equation}
Upper bounds on $\E_{\mu_0}[x_i(T;\vec{x}_0)]$ ($i=1$ or $2$) can be computed as discussed in \cref{ss:bounds,s: sos} upon setting $g(\vec{x})=x_i$, $\conFun(x)=(x, y, x^2, xy, y^2)$, and $\vec{c} = (\mu_1, \mu_2, \mu_1^2+\Sigma_{11}, \mu_1\mu_2+\Sigma_{12}, \mu_2^2+\Sigma_{22})$; the only change required to handle the constraint $\E_{\mu_0}(\vec{h})=\vec{c}$ instead of $\E_{\mu_0}(\vec{h})\leq \vec{c}$ is that one drops the nonnegativity constraints on the entries of $\boldsymbol{\beta}$ in \cref{e:weak-duality,e:sos-bound}. Lower bounds on $\E_{\mu_0}[x_i(T;\vec{x}_0)]$ are deduced by negating upper bounds on $\E_{\mu_0}[-x_i(T;\vec{x}_0)]$.

\Cref{tab:vdp-res} lists the bounds computed by solving \cref{e:sos-bound} for $T\in\{1,2,3,4,5\}$ and $\omega \in \{8,12,16\}$. \Cref{fig:vdp-res} compares these bounds to the expected state values approximated using the simulation setup described at the start of this section. As expected, the bounds improve as $\omega$ is raised and always bracket the simulation results. Higher $\omega$ is needed to obtained well-converged bounds at larger times, which is not surprising since nonlinear uncertainty propagation with larger time horizons is expected to be harder. \Cref{fig:vdp-res} also suggest that our bounds become sharp as $\omega \to \infty$, but we cannot guarantee this because \cref{th:sos-convergence-compact} does not apply to our computations where $\mathcal{X}=\mathcal{X}_0=\mathbb{R}^2$.

\subsection{Lorenz system}\label{ss:examples:lorenz}
As our second example we consider the Lorenz system with the standard parameter values for chaotic behaviour,
\begin{equation}\label{e:lorenz}
    \ddt{}
    \begin{pmatrix}
    x_1\\
    x_2\\
    x_3\\
    \end{pmatrix} =
    \begin{pmatrix}
    10 (x_2-x_1)\\
    x_1(28 - x_3) - x_2\\
    x_1x_2- \tfrac83 x_3 \\
    \end{pmatrix}.
\end{equation}
We seek bounds on the time-$T$ expectation of each $x_i$ for initial state distributions with the mean and covariance matrix
\begin{equation}\label{e:lorenz-data}
        \boldsymbol{\mu}=\begin{pmatrix}
        2.254\\
        4.029\\
        10.646\end{pmatrix}
        \;\text{and}\;\,
        {\Sigma} =
        \begin{pmatrix}
        0.314 & 0 & 0 \\
        0 &  0.706 & 0 \\
        0 & 0 & 0.314
        \end{pmatrix}\!.
\end{equation}
(The mean $\boldsymbol{\mu}$ is a point near the chaotic attractor.) Results for $T=\{0.2,0.4,0.6,0.8\}$, along with approximate expectations from simulations, are shown in \cref{tab:lorenz,fig:lorenz} and are analogous to those obtained in \cref{ss:examples:vdp}. The bounds are also relatively robust to the initial mean and covariance: replacing \cref{e:lorenz-data} with the
initial mean and covariance of the simulated ensemble of
trajectories changed the bounds for $\omega=14$ by less than $4\%$. Our approach to uncertainty propagation thus works well also for chaotic systems. 
\section{Proofs}
\label{s:proofs}

To make this paper self-contained and accessible to non-experts, we now give a detailed proof of \cref{th:strong-duality}, which to the best of our knowledge is absent from the literature. The ideas are similar to proofs in \cite{Lasserre2008,Korda2014} and many other works presenting polynomial optimization frameworks for dynamical system analysis. All arguments except the minimax steps in \cref{ss:strong-duality-proof} below were very briefly outlined in \cite{Streif2014}. As in \cref{s: sos}, we ease the notation by setting $\Omega := [0,T]\times \mathcal{X}.$

\subsection{A measure-theoretic reformulation}
We begin by recasting the maximization problem in \cref{e:setup:max-expectation} as a problem over measures. We do this by introducing the state distribution at time $t$, which is denoted by $\mu_t$ and satisfies
\begin{equation}\label{e:pushforward-conditions}
    \int_{\mathbb{R}^n} \Psi(\vec{x}) \,\rd\mu_t(\vec{x})
    =
    \int_{\mathcal{X}_0} \Psi(\vec{x}(t;\vec{x}_0)) \,\rd\mu_0(\vec{x}_0)
\end{equation}
for every continuous function $\Psi:\mathbb{R}^n \to \mathbb{R}$. The integral on the left-hand side can be restricted to the support of $\mu_t$, which is the time-$t$ image of $\mathcal{X}_0$ under the system dynamics, and therefore to the set $\mathcal{X}$ in which all trajectories starting from $\mathcal{X}_0$ remain for times $t \in [0,T]$.

It is well known (see, e.g., \cite{Ambrosio2008}) that the family of probability measures $\{\mu_t\}_{t \in [0,T]}$ satisfies the Liouville equation
\begin{equation}\label{e:setup:liouville}
    \partial_t \mu_t + \nabla_{\vec{x}} \cdot (\vec{f} \mu_t) = 0
\end{equation}
with initial condition $\mu_0$. The equation holds in the sense of distributions, meaning that
\begin{equation}\label{e:setup:weak-liouville}
    \int_{0}^T \int_{\mathcal{X}} 
    \mathcal{L}v(t,\vec{x}) \,\rd\mu_t(\vec{x}) \rd t = 0
\end{equation}
for every smooth function $v$ compactly supported in the interior of $[0,T] \times \mathcal{X}$. Recall from \cref{e:Lie-derivative} that $\mathcal{L}v = \partial_t v + \vec{f} \cdot \nabla_{\vec{x}} v$.

In fact, the equation $\mathcal{L}v=\ddt v$ along solutions of the ODE \cref{e:ode} can be integrated first in time and then against the the probability measure $\mu_0$ to conclude that
\begin{multline}\label{e:measures-identity}
    \int_{0}^T \int_{\mathcal{X}} 
    \mathcal{L}v(t,\vec{x}) \,\rd\mu_t(\vec{x}) \rd t
    =\\
    \int_{\mathcal{X}} v(T,\vec{x}) \,\rd\mu_T(\vec{x})
    - \int_{\mathcal{X}_0} v(0,\vec{x}) \,\rd\mu_0(\vec{x})
\end{multline}
for every continuously differentiable $v$ for which all integrals exist. Such identities completely characterize solutions to the Liouville equation as per the following result, where $T\Prob(\Omega)$ is the sets of nonnegative measures with mass $T$ on $\Omega$.

\begin{table}[t]
    \vskip5pt
    \caption{
    Lower and upper bounds (LB and UB) on the expected time-$T$ state of the Lorenz system \cref{e:lorenz}, computed with degree-$\omega$ polynomial $v$ and $\mu_0$ with mean and covariance matrix in \cref{e:lorenz-data}.
    }
    \centering
    \begin{tabular}{cc rr rr rr}
    \toprule
    &&\multicolumn{2}{c}{$\E_{\mu_0}[x_1(T;\vec{x}_0)]$}
    & \multicolumn{2}{c}{$\E_{\mu_0}[x_2(T;\vec{x}_0)]$}
    & \multicolumn{2}{c}{$\E_{\mu_0}[x_3(T;\vec{x}_0)]$}
    \\[0.5ex]
    $T$ & $\omega$ &
    \multicolumn{1}{c}{LB} & 
    \multicolumn{1}{c}{UB} &
    \multicolumn{1}{c}{LB} & 
    \multicolumn{1}{c}{UB} &
    \multicolumn{1}{c}{LB} & 
    \multicolumn{1}{c}{UB} \\
    [0.75ex]
    0.2 & 10 &  11.472 &  11.640 &  19.099 &  19.587 &  18.894 &  19.389 \\
    0.2 & 12 &  11.482 &  11.639 &  19.158 &  19.585 &  18.905 &  19.343 \\
    0.2 & 14 &  11.482 &  11.639 &  19.161 &  19.585 &  18.905 &  19.340 \\
    [0.5ex]
    0.4 & 10 &  7.110 &  8.321 & -2.820 & -0.816 &  34.456 &  36.382 \\
    0.4 & 12 &  7.268 &  8.275 & -2.700 & -1.031 &  34.746 &  36.345 \\
    0.4 & 14 &  7.301 &  8.261 & -2.666 & -1.107 &  34.840 &  36.338 \\
    [0.5ex]
    0.6 & 10 & -2.536 & -1.051 & -4.485 & -2.375 &  19.708 &  21.655 \\
    0.6 & 12 & -2.469 & -1.190 & -4.323 & -2.602 &  19.831 &  21.417 \\
    0.6 & 14 & -2.429 & -1.190 & -4.350 & -2.598 &  19.887 &  21.378 \\
    [0.5ex]
    0.8 & 10 & -7.811 & -2.666 & -12.294 & -0.635 &  14.936 &  24.344 \\
    0.8 & 12 & -7.495 & -4.520 & -11.928 & -6.017 &  15.734 &  19.740 \\
    0.8 & 14 & -7.353 & -5.290 & -11.679 & -8.556 &  15.871 &  18.261 \\
    \bottomrule
    \label{tab:lorenz}
    \end{tabular}
\end{table}

\begin{figure}
    \centering
    \includegraphics[width=0.99\linewidth, trim=0cm 0.3cm 0cm 0.85cm]{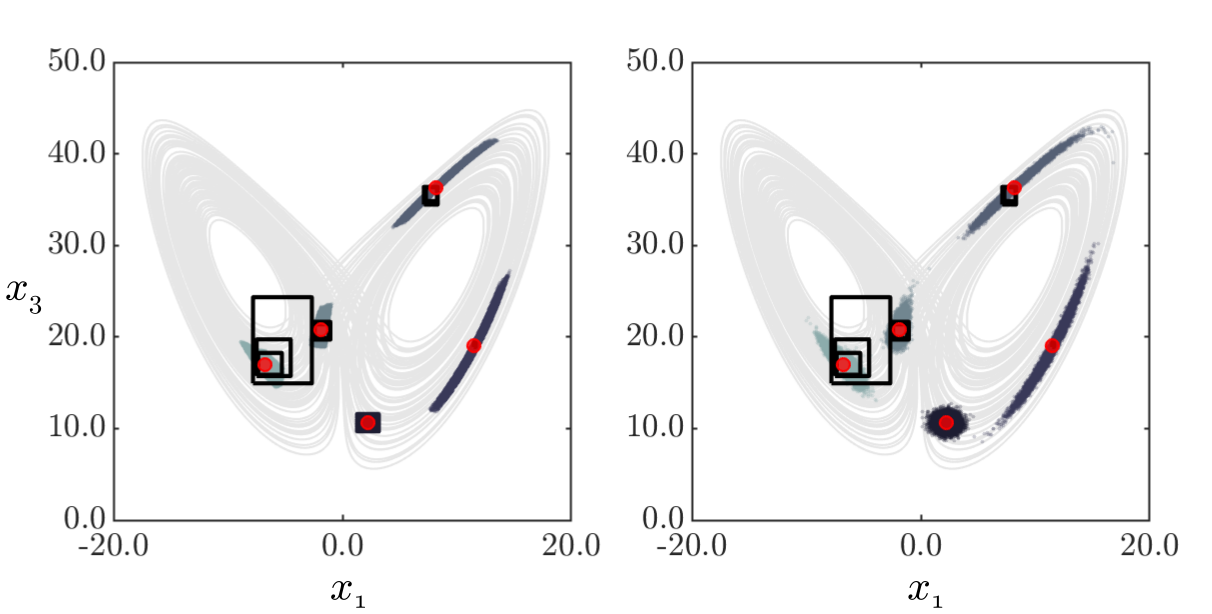}
    \caption{Expected state (\red{$\bullet$}) for the Lorenz system at times
    $T=0$ (\dpone{$\bullet$}),
    $0.2$ (\dptwo{$\bullet$}),
    $0.4$ (\dpthree{$\bullet$}),
    $0.6$ (\dpfour{$\bullet$}) and
    $0.8$ (\dpfive{$\bullet$}), approximated using $10^6$ initial conditions with a uniform (left) and a normal (right) distribution with mean and covariance matrix in \cref{e:lorenz-data}. The attractor and state distribution at each time $T$ are also shown. Boxes show the bounds from \cref{tab:lorenz}.
   \label{fig:lorenz}}
\end{figure}

\begin{lemma}\label{lemma:liouville-characterization}
    If $\lambda \in \Prob(\mathcal{X}_0)$, $\nu\in \Prob(\mathcal{X})$, $\mu \in T\Prob(\Omega)$ satisfy
    \begin{multline}\label{e:relaxed-measures-identity}
    \int_{\Omega} \mathcal{L}v(t,\vec{x}) \,\rd\mu(t,\vec{x})
    =\\
    \int_{\mathcal{X}} v(T,\vec{x}) \,\rd\nu(\vec{x})
    - \int_{\mathcal{X}_0} v(0,\vec{x}) \,\rd\lambda(\vec{x})
    \end{multline}
    for all $v \in C^1(\Omega)$, then $\mu$ admits the disintegration
    $\mu(\rd t,\rd \vec{x}) = \mu_t(\rd\vec{x}) \rd t$
    where $\mu_t:[0,T]\to \mathcal{X}$ solves the Liouville equation \cref{e:setup:liouville}. Moreover, $\lambda=\mu_0$ and $\nu = \mu_T$.
\end{lemma}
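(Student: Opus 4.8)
The plan is to recover the three claims — the disintegration, the Liouville equation, and the matching of the boundary data — one at a time, exploiting the freedom to choose the test function $v$ in \cref{e:relaxed-measures-identity} so as to isolate each piece of information in turn. Throughout I would use that $\mathcal{X}_0$ and $\mathcal{X}$ are compact, so that $\Omega$ is compact, all measures involved are tight, and every continuous integrand is bounded.

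First I would determine the time-marginal of $\mu$. Testing \cref{e:relaxed-measures-identity} against functions $v(t,\vec{x}) = w(t)$ with $w \in C^1([0,T])$ gives $\mathcal{L}v = w'(t)$, so the identity collapses to $\int_\Omega w'(t)\,\rd\mu = w(T) - w(0)$, where I used that $\lambda$ and $\nu$ are probability measures. Writing $\rho$ for the push-forward of $\mu$ under $(t,\vec{x})\mapsto t$, this reads $\int_0^T w'\,\rd\rho = \int_0^T w'\,\rd t$; since every continuous function on $[0,T]$ is the derivative of some $w \in C^1([0,T])$, it follows that $\rho$ is Lebesgue measure on $[0,T]$. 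The disintegration theorem applied to the projection onto the $t$-axis then yields a family of probability measures $\{\mu_t\}$ on $\mathcal{X}$ with $\mu(\rd t,\rd\vec{x}) = \mu_t(\rd\vec{x})\,\rd t$, proving the first claim. Substituting this disintegration back into \cref{e:relaxed-measures-identity} and restricting to $v$ compactly supported in the interior of $\Omega$ annihilates both boundary terms and reproduces exactly the weak Liouville equation \cref{e:setup:weak-liouville}, so $\{\mu_t\}$ solves \cref{e:setup:liouville}.

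The remaining and most delicate task is to identify $\lambda$ and $\nu$ with the traces $\mu_0$ and $\mu_T$, which first requires making sense of these traces at all. Here I would test \cref{e:relaxed-measures-identity} against separated functions $v(t,\vec{x}) = \theta(t)\,\psi(\vec{x})$ with $\theta \in C^1([0,T])$ and $\psi \in C^1(\mathcal{X})$. Setting $m(t) := \int_\mathcal{X} \psi\,\rd\mu_t$ and $F(t) := \int_\mathcal{X} (\vec{f}\cdot\nabla_{\vec{x}}\psi)\,\rd\mu_t$, both bounded and measurable on $[0,T]$, the identity becomes $\int_0^T \theta' m\,\rd t + \int_0^T \theta F\,\rd t = \theta(T)\int_\mathcal{X}\psi\,\rd\nu - \theta(0)\int_{\mathcal{X}_0}\psi\,\rd\lambda$. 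Taking $\theta$ compactly supported in $(0,T)$ shows that $m' = F$ in the distributional sense, so $m$ admits an absolutely continuous representative $\tilde m$ with $\tilde m' = F$ almost everywhere. Integrating $\int_0^T \theta' \tilde m\,\rd t$ by parts and comparing with the displayed identity for general $\theta$ then forces $\tilde m(T) = \int_\mathcal{X}\psi\,\rd\nu$ and $\tilde m(0) = \int_{\mathcal{X}_0}\psi\,\rd\lambda$. In other words, $\int\psi\,\rd\mu_t$ has a continuous-in-time representative whose endpoint values are prescribed by $\nu$ and $\lambda$; since this holds for every $\psi \in C^1(\mathcal{X})$ and $C^1(\mathcal{X})$ is dense in $C(\mathcal{X})$ for compact $\mathcal{X}$, the weak-$*$ limits of $\mu_t$ as $t \to 0$ and $t \to T$ exist and equal $\lambda$ and $\nu$, which I would identify with $\mu_0$ and $\mu_T$. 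I expect this last step to be the main obstacle: distributional solutions of the continuity equation need not possess pointwise-defined traces a priori, and it is precisely the reduction via separated test functions to a one-dimensional absolutely continuous function that makes the traces — and hence the equalities $\lambda = \mu_0$ and $\nu = \mu_T$ — rigorous while remaining elementary.
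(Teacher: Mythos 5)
Your proof is correct, and its first half---identifying the $t$-marginal of $\mu$ with Lebesgue measure, disintegrating $\mu$, and recovering the weak Liouville equation \cref{e:setup:weak-liouville} from test functions supported in the interior of $\Omega$---is the same as the paper's, up to the cosmetic difference that the paper tests with the monomials $v=t^\alpha$ while you test with general $w\in C^1([0,T])$. Where you genuinely diverge is in identifying the boundary measures. The paper gets there by citing \cite{Ambrosio2008}: since $\mu_t$ solves the Liouville equation, it satisfies the pushforward property \cref{e:pushforward-conditions} (this is where uniqueness of solutions of the ODE enters), hence the identity \cref{e:measures-identity} holds for all $v\in C^1(\Omega)$; comparing with \cref{e:relaxed-measures-identity} and testing with products $\xi(t)\eta(\vec{x})$ vanishing at one endpoint gives $\lambda=\mu_0$ and $\nu=\mu_T$. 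You instead prove the trace identification from scratch: for separated test functions $\theta(t)\psi(\vec{x})$ you show that $m(t)=\int_{\mathcal{X}}\psi\,\rd\mu_t$ has distributional derivative $F\in L^\infty(0,T)$, hence an absolutely continuous representative, and integration by parts pins its endpoint values to $\int\psi\,\rd\lambda$ and $\int\psi\,\rd\nu$. Your route is more elementary and self-contained: it needs neither the superposition/uniqueness theory for the continuity equation nor uniqueness of the ODE flow, and it makes explicit that the a.e.-defined family $\mu_t$ has well-defined weak-star traces at $t=0$ and $t=T$, a point the paper silently outsources to \cite{Ambrosio2008}. What the paper's route buys instead is the pushforward property itself, which is what \cref{prop:max-expectation-measures} actually uses downstream (namely that $\nu$ is the time-$T$ image of $\lambda$ under the flow); your argument does not deliver this, so the appeal to \cite{Ambrosio2008} is deferred to that proposition rather than eliminated.

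Two details you should spell out, both routine. First, boundedness of $F$ requires $\vec{f}$ to be bounded on $\Omega$, which holds because $\vec{f}$ is continuous and $\Omega$ is compact. Second, the absolutely continuous representative of $t\mapsto\int_{\mathcal{X}}\psi\,\rd\mu_t$ and the associated null set depend on $\psi$; to conclude that $\mu_t$ converges weak-star to $\lambda$ as $t\to 0^+$ (and to $\nu$ as $t\to T^-$) you should run the argument over a countable family of functions $\psi$ dense in $C(\mathcal{X})$, discard the union of the corresponding null sets, and use that the $\mu_t$ are probability measures, so that convergence against a dense set of test functions implies weak-star convergence.
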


\begin{proof}
    Setting $v(t,\vec{x}) = t^\alpha$ for every $\alpha \in \mathbb{N}$ shows that the $t$-marginal of $\mu$ is the Lebesgue measure on $[0,T]$. Thus, by a disintegration theorem for measures~\cite[Theorem~4.4]{Rindler2018}, for almost every $t$ there exists $\mu_t \in \Prob(\mathcal{X})$ such that
    \begin{equation*}
    \int_\Omega \mathcal{L}v(t,\vec{x}) \,\rd\mu(t,\vec{x})
    =
    \int_{0}^T \int_{\mathcal{X}} \mathcal{L}v(t,\vec{x}) \,\rd\mu_t(\vec{x}) \rd t.
    \end{equation*}
    Substituting this identity in \cref{e:relaxed-measures-identity} and taking $v$ to be smooth and compactly supported in the interior of $\Omega = [0,T] \times \mathcal{X}$ gives \cref{e:setup:weak-liouville}, so $\mu_t$ solves the Liouville equation. This means $\mu_t$ satisfies \cref{e:pushforward-conditions} \cite{Ambrosio2008}, hence also \cref{e:measures-identity} for every $v \in C^1(\Omega)$. Combining this with \cref{e:relaxed-measures-identity} yields
    \begin{multline*}
    \int_{\mathcal{X}} v(T,\vec{x}) \,\rd\mu_T(\vec{x})
    - \int_{\mathcal{X}_0} v(0,\vec{x}) \,\rd\mu_{0}(\vec{x})
    =\\
    \int_{\mathcal{X}} v(T,\vec{x}) \,\rd\nu(\vec{x})
    - \int_{\mathcal{X}_0} v(0,\vec{x}) \,\rd\lambda(\vec{x}).
    \end{multline*}
    Choosing $v(t,\vec{x})=\xi(t)\eta(\vec{x})$ with $\xi(T)=0$ (resp. $\xi(0)=0$) and $\eta$ arbitrary shows that $\lambda= \mu_0$ (resp. $\nu = \mu_T$).
\end{proof}

\Cref{lemma:liouville-characterization} enables us to evaluate the maximal expectation in \cref{e:setup:max-expectation} via the solution of a linear program over measures, which is the occupation measure relaxation described in~\cite{Streif2014}.

\begin{proposition}[\cite{Streif2014}]\label{prop:max-expectation-measures}
    There holds
    \begin{equation}\label{e:setup:max-expectation-measures}
    p^\star = \sup_{
        \substack{
        (\mu_0,\nu,\mu)\in \Prob(\mathcal{X}_0)\times \Prob(\mathcal{X}) \times T\Prob(\Omega)\\
        \text{s.t. \cref{e:relaxed-measures-identity} and }
        \int_{\mathcal{X}_0} \conFun \,\rd\mu_0 \leq \vec{c}
        }}
        \int_{\mathcal{X}} \objFun(\vec{x}) \, \rd\nu(\vec{x}).
    \end{equation}
\end{proposition}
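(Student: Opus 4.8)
The plan is to prove the identity by establishing the two inequalities $p^\star \le P$ and $P \le p^\star$ separately, where $P$ denotes the supremum on the right-hand side of \cref{e:setup:max-expectation-measures}. The key conceptual point is that the measure problem is a \emph{lift} of the original problem \cref{e:setup:max-expectation} whose feasible set turns out to encode the pushforward structure of the dynamics exactly, rather than merely relax it, by virtue of \cref{lemma:liouville-characterization}.

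First I would show $p^\star \le P$ by lifting every admissible initial distribution to a feasible triple. Given any $\mu_0 \in \Prob(\mathcal{X}_0)$ satisfying the moment constraint in \cref{e:setup:initial-stats}, let $\{\mu_t\}_{t\in[0,T]}$ be the family of state distributions defined by the pushforward relation \cref{e:pushforward-conditions}, set $\nu := \mu_T$, and define the occupation measure $\mu(\rd t,\rd\vec{x}) := \mu_t(\rd\vec{x})\,\rd t$. Since each $\mu_t$ is a probability measure supported in $\mathcal{X}$ and we integrate over $[0,T]$, we have $\mu \in T\Prob(\Omega)$ and $\nu \in \Prob(\mathcal{X})$. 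The identity \cref{e:measures-identity} is precisely \cref{e:relaxed-measures-identity} for this triple, so it is feasible, and applying \cref{e:pushforward-conditions} with $\Psi = \objFun$ gives $\int_{\mathcal{X}} \objFun \,\rd\nu = \E_{\mu_0}[\objFun(\vec{x}(T;\vec{x}_0))]$, so the two objectives agree. Taking the supremum over all admissible $\mu_0$ yields $p^\star \le P$.

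Conversely, to show $P \le p^\star$ I would use \cref{lemma:liouville-characterization} to collapse every feasible triple back onto an admissible initial distribution. Let $(\mu_0,\nu,\mu)$ be feasible for \cref{e:setup:max-expectation-measures}. By \cref{lemma:liouville-characterization}, $\mu$ disintegrates as $\mu_t(\rd\vec{x})\,\rd t$ with $\mu_t$ solving the Liouville equation from the initial datum $\mu_0$, and moreover $\nu = \mu_T$. Hence $\nu$ is exactly the time-$T$ state distribution generated by $\mu_0$, so \cref{e:pushforward-conditions} with $\Psi = \objFun$ again gives $\int_{\mathcal{X}} \objFun \,\rd\nu = \E_{\mu_0}[\objFun(\vec{x}(T;\vec{x}_0))]$. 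Since $\mu_0 \in \Prob(\mathcal{X}_0)$ satisfies the moment constraint, it is admissible in \cref{e:setup:max-expectation}, so the triple's objective is bounded above by $p^\star$. Taking the supremum over feasible triples gives $P \le p^\star$, and combining the two inequalities proves the claim.

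I expect the only delicate point to be the interplay between regularity and the transfer of the objective value: both \cref{e:pushforward-conditions} and \cref{lemma:liouville-characterization} are stated for continuous (respectively $C^1$) test functions, so I should confirm that the observable $\objFun$ is continuous (which it is in our polynomial setting) and that all relevant integrals are finite, which is automatic on the compact sets $\mathcal{X}$ and $\Omega$. Everything else is routine bookkeeping of feasibility; the substantive content — that the weak Liouville constraint \cref{e:relaxed-measures-identity} \emph{forces} the occupation-measure structure rather than enlarging the feasible set — has already been isolated and proved in \cref{lemma:liouville-characterization}, so no further minimax or duality argument is needed here.
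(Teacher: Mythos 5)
Your proposal is correct and matches the paper's own argument: the paper likewise invokes \cref{lemma:liouville-characterization} to disintegrate $\mu$ and identify $\nu=\mu_T$ as the time-$T$ image of $\mu_0$, then uses \cref{e:pushforward-conditions} to equate the two objectives, with the lifting direction (via \cref{e:measures-identity}) left implicit in the word ``equivalence.'' Your version simply spells out both inequalities explicitly, which is a faithful expansion of the same proof rather than a different route.
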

\begin{proof}
    Use \cref{lemma:liouville-characterization} to disintegrate $\mu$. Its density $\mu_t$ solves the Liouville equation, so $\nu=\mu_T$ is the time-$T$ image of $\mu_0$.
    Then, \cref{e:pushforward-conditions} shows the equivalence of \cref{e:setup:max-expectation-measures} and \cref{e:setup:max-expectation}.
\end{proof}

\subsection{Proof of \texorpdfstring{\cref{th:strong-duality}}{theorem \ref{th:strong-duality}}}\label{ss:strong-duality-proof}
We now turn to the proof of \cref{th:strong-duality}. We apply a minimax argument to the maximization in~\cref{e:setup:max-expectation-measures} to conclude that $p^*=d^*$. Specifically, let
\begin{align*}
    \Lambda(\mu_0,\mu,\nu; v,\boldsymbol{\beta}) :=
    \boldsymbol{\beta}\cdot\vec{c}
    +
    \int_{\mathcal{X}_0} \left[v(0,\vec{x}) - \boldsymbol{\beta} \cdot \vec{h}(\vec{x})\right] \rd\mu_0(\vec{x})&
    \\
    + \int_{\mathcal{X}} \left[g(\vec{x}) - v(T,\vec{x}) \right]\rd\nu(\vec{x})&
    \\
    + \int_{\Omega} \mathcal{L}v(t,\vec{x}) \, \rd\mu(t,\vec{x})&
\end{align*}
be the Lagrangian corresponding to the maximization in~\cref{e:setup:max-expectation-measures}, where $v \in C^1(\Omega)$ and $\beta \in \mathbb{R}^m$ are Lagrange multipliers for the constraints of~\cref{e:setup:max-expectation-measures}. Set 
$\mathbb{X} = \Prob(\mathcal{X}_0) \times T\Prob(\Omega) \times \Prob(\mathcal{X})$ and $\mathbb{Y} = \mathbb{R}^m_+ \times C^1(\Omega)$.
We claim that
\begin{subequations}
    \begin{align}
    \label{e:strong-duality:step1}
    p^\star
    &=
    \adjustlimits
    \sup_{
        (\mu_0,\lambda,\nu)\in\mathbb{X}
    }
    \inf_{
        (\boldsymbol{\beta},v)\in\mathbb{Y}
    }
    \;
    \Lambda(\mu_0,\lambda,\nu; v,\boldsymbol{\beta})
    \\
    \label{e:strong-duality:step2}
    &=
    \adjustlimits
    \inf_{
        (\boldsymbol{\beta},v)\in\mathbb{Y}
    }
    \sup_{
        (\mu_0,\lambda,\nu)\in\mathbb{X}
    }
    \;
    \Lambda(\mu_0,\lambda,\nu; v,\boldsymbol{\beta})
    \\
    \label{e:strong-duality:step3}
    &= d^\star.
\end{align}
\end{subequations}

Equality~\cref{e:strong-duality:step1} holds since the inf on the right-hand side is $-\infty$ unless $\lambda$, $\nu$ and $\mu$ satisfy the constraints of~\cref{e:setup:max-expectation-measures}.

For the equality in~\cref{e:strong-duality:step2}, recall that $\mathcal{X}$ and $\mathcal{X}_0$ are compact. Then, the set $\mathbb{X}$ equipped with the product weak-star topology is a compact convex subset of a linear topological space. Observe also that $\mathbb{Y}$ with the usual product topology is a convex subset of a linear space.
Finally, note that the Lagrangian $\Lambda$ is linear and continuous on $\mathbb{X} \times \mathbb{Y}$ for the stated topologies. We have thus verified all assumptions of a minimax theorem due to Sion \cite{Sion1958}, which guarantees that the order of the inf and the sup in~\cref{e:strong-duality:step1} is irrelevant.

Finally, we turn to the equality in~\cref{e:strong-duality:step3}. The optimal $\mu_0$, $\nu$ and $\mu$ in~\cref{e:strong-duality:step2} are Dirac measures at the maximizers of $v(0,\cdot) -\boldsymbol{\beta} \cdot \vec{h}(\cdot)$, $g(\cdot) - v(T,\cdot)$ and $\mathcal{L}v$, respectively. Thus,
\begin{align*}
    p^\star =
    \inf_{\substack{
        \boldsymbol{\beta} \in \mathbb{R}_+^m\\
        v \in C^1(\Omega)
    }}
    \big\{
        \boldsymbol{\beta}\cdot\vec{c} +
        \max_{\vec{x} \in \mathcal{X}_0} \left[v(0,\vec{x}) - \boldsymbol{\beta} \cdot \vec{h}(\vec{x})\right]&
        \\[-2.5ex]
        + \max_{\vec{x} \in \mathcal{X}} \left[g(\vec{x}) - v(T,\vec{x})\right]&
        \\
        + T \max_{(t,\vec{x}) \in \Omega} \mathcal{L}v(t,\vec{x})&
    \big\}.
\end{align*}
The right-hand infimum is unchanged if $v$ is replaced with
\begin{align*}
    v'(t,\vec{x}) := v(t,\vec{x})
    &+ \max_{\vec{x} \in \mathcal{X}} \left[g(\vec{x}) - v(T,\vec{x})\right]
    \\
    &+ (T-t) \max_{(t,\vec{x}) \in \Omega} \mathcal{L}v(t,\vec{x}),
\end{align*}
which satisfies \cref{eqn:line-1,eqn:line-2}. Consequently,
\begin{equation*}
    p^\star = \inf_{\substack{
        \boldsymbol{\beta} \in \mathbb{R}_+^m,\,
        v \in C^1(\Omega)\\
        \text{s.t. \cref{eqn:line-1,eqn:line-2}}
    }}
        \big\{
        \boldsymbol{\beta}\cdot\vec{c} + \max_{\vec{x} \in \mathcal{X}_0} \left[v(0,\vec{x}) - \boldsymbol{\beta} \cdot \vec{h}(\vec{x})\right]
        \big\}.
\end{equation*}
The right-hand side is clearly equal to $d^\star$.

\section{Conclusion} \label{s: conclusione}

We used convex optimization to propagate uncertainty in the initial condition of nonlinear ODEs. Our approach bounds the expectation of a system observable $g$ by searching for auxiliary functions and is dual to the occupation measure relaxations developed for parameter identification in \cite{Streif2014}. Expectation bounds can be estimated computationally by solving semidefinite programs if the ODE vector field and the observable are polynomials. For systems evolving in compact sets, the optimal upper (lower) bound evaluates the maximum (minimum) expectation consistent with the initial uncertainty and can be approximated numerically to any accuracy.

The method we described can be extended in multiple directions. First, computations can be carried out for non-polynomial ODEs and observables provided the constraints of problem \cref{e:sos-bound} can be recast as polynomial inequalities. Examples are rational functions, sinusoidal functions and (as in orbital mechanics) rational powers of
polynomials. Second, one can apply our method to stochastic processes by replacing the operator $\mathcal{L}$ with the generator of the stochastic process. In this case, however, care must be taken to check that the convergence results we presented carry over as expected.
We believe that this flexibility makes the convex optimization framework we described a valuable tool for uncertainty propagation in low-dimensional nonlinear systems. 
Confirming this on examples of practical relevance, however, remains a challenge.

\bibliographystyle{./bib-styles/IEEEtranS}
\bibliography{reflist}

\begin{thebibliography}{10}
\providecommand{\url}[1]{#1}
\csname url@rmstyle\endcsname
\providecommand{\newblock}{\relax}
\providecommand{\bibinfo}[2]{#2}
\providecommand\BIBentrySTDinterwordspacing{\spaceskip=0pt\relax}
\providecommand\BIBentryALTinterwordstretchfactor{4}
\providecommand\BIBentryALTinterwordspacing{\spaceskip=\fontdimen2\font plus
\BIBentryALTinterwordstretchfactor\fontdimen3\font minus
  \fontdimen4\font\relax}
\providecommand\BIBforeignlanguage[2]{{%
\expandafter\ifx\csname l@#1\endcsname\relax
\typeout{** WARNING: IEEEtran.bst: No hyphenation pattern has been}%
\typeout{** loaded for the language `#1'. Using the pattern for}%
\typeout{** the default language instead.}%
\else
\language=\csname l@#1\endcsname
\fi
#2}}

\bibitem{Ambrosio2008}
L.~Ambrosio, ``Transport equation and {C}auchy problem for non-smooth vector
  fields,'' in \emph{Calculus of variations and nonlinear partial differential
  equations}.\hskip 1em plus 0.5em minus 0.4em\relax Springer, Berlin, 2008,
  pp. 1--41.

\bibitem{cherny2014}
S.~I. Chernyshenko, P.~Goulart, D.~Huang, and A.~Papachristodoulou,
  ``Polynomial sum of squares in fluid dynamics: a review with a look ahead,''
  \emph{Philos. Trans. Roy. Soc. A}, vol. 372, pp. 20\,130\,350, 18, 2014.

\bibitem{Fantuzzi2016}
G.~Fantuzzi, D.~Goluskin, D.~Huang, and S.~I. Chernyshenko, ``Bounds for
  deterministic and stochastic dynamical systems using sum-of-squares
  optimization,'' \emph{SIAM J. Appl. Dyn. Syst.}, vol.~15, pp. 1962--1988,
  2016.

\bibitem{Fantuzzi2020}
G.~Fantuzzi and D.~Goluskin, ``Bounding extreme events in nonlinear dynamics
  using convex optimization,'' \emph{SIAM J. Appl. Dyn. Syst.}, vol.~19, pp.
  1823--1864, 2020.

\bibitem{Goluskin2020}
D.~Goluskin, ``Bounding extrema over global attractors using polynomial
  optimisation,'' \emph{J. Nonlinear Sci.}, vol.~33, pp. 4878--4899, 2020.

\bibitem{Korda2014}
D.~Henrion and M.~Korda, ``Convex computation of the region of attraction of
  polynomial control systems,'' \emph{IEEE Trans. Automat. Control}, vol.~59,
  no.~2, pp. 297--312, 2014.

\bibitem{Hilbert1888}
D.~Hilbert, ``{\"Uber die Darstellung definiter Formen als Summe von
  Formenquadraten},'' \emph{Math. Ann.}, vol.~32, no.~3, pp. 342--350, 1888.

\bibitem{Jones2013}
B.~A. Jones, A.~Doostan, and G.~H. Born, ``Nonlinear propagation of orbit
  uncertainty using non-intrusive polynomial chaos,'' \emph{J. Guidance,
  Control, Dyn.}, vol.~36, pp. 430--444, 2013.

\bibitem{Julier2004}
S.~J. Julier and J.~K. Uhlmann, ``Unscented filtering and nonlinear
  estimation,'' \emph{Proceedings of the IEEE}, vol.~92, pp. 401--422, 2004.

\bibitem{Lasserre2008}
J.~B. Lasserre, D.~Henrion, C.~Prieur, and E.~Tr\'{e}lat, ``Nonlinear optimal
  control via occupation measures and {LMI}-relaxations,'' \emph{SIAM J.
  Control Optim.}, vol.~47, no.~4, pp. 1643--1666, 2008.

\bibitem{Lasserre2009book}
J.~B. Lasserre, \emph{Moments, positive polynomials and their
  applications}.\hskip 1em plus 0.5em minus 0.4em\relax Imperial College Press,
  London, 2010.

\bibitem{Lasserre2015}
------, \emph{An introduction to polynomial and semi-algebraic
  optimization}.\hskip 1em plus 0.5em minus 0.4em\relax Cambridge University
  Press, 2015.

\bibitem{Laurent2009}
M.~Laurent, ``Sums of squares, moment matrices and optimization over
  polynomials,'' in \emph{Emerging applications of algebraic geometry}, ser.
  IMA Vol. Math. Appl.\hskip 1em plus 0.5em minus 0.4em\relax Springer, 2009,
  vol. 149, pp. 157--270.

\bibitem{lofberg2004yalmip}
J.~L\"ofberg, ``{YALMIP: A toolbox for modeling and optimization in MATLAB},''
  in \emph{IEEE Int. Symp. Comput. Aided Control Syst. Des.}, 2004, pp.
  284--289.

\bibitem{Lofberg2009sos}
J.~L{\"{o}}fberg, ``{Pre-and post-processing sum-of-squares programs in
  practice},'' \emph{IEEE Trans. Automat. Control}, vol.~54, no.~5, pp.
  1007--1011, 2009.

\bibitem{Luo2017AMechanics}
Y.~Z. Luo and Z.~Yang, ``A review of uncertainty propagation in orbital
  mechanics,'' \emph{Progress Aerosp. Sci.}, vol.~89, pp. 23--39, 2017.

\bibitem{Miller2020}
J.~Miller, D.~Henrion, and M.~Sznaier, ``Peak estimation recovery and safety
  analysis,'' \emph{IEEE Control Syst. Lett.}, vol.~5, pp. 1982--1987, 2021.

\bibitem{mosek}
{MOSEK ApS}, \emph{MOSEK optimization toolbox for MATLAB, v9.0.}, 2019.

\bibitem{Park2006}
R.~S. Park and D.~J. Scheeres, ``Nonlinear mapping of gaussian statistics:
  Theory and applications to spacecraft trajectory design,'' \emph{J. Guidance,
  Control, Dyn.}, vol.~29, pp. 1367--1375, 2006.

\bibitem{Parrilo2013book}
P.~A. Parrilo, ``Polynomial optimization, sums of squares, and applications,''
  in \emph{Semidefinite optimization and convex algebraic geometry}, ser.
  MOS-SIAM Ser. Optim.\hskip 1em plus 0.5em minus 0.4em\relax SIAM, 2013,
  vol.~13, pp. 47--157.

\bibitem{Putinar1993}
M.~Putinar, ``{Positive polynomials on compact sets},'' \emph{Indiana Univ.
  Math. J.}, vol.~42, no.~3, pp. 969--984, 993.

\bibitem{Rindler2018}
F.~Rindler, \emph{{Calculus of Variations}}.\hskip 1em plus 0.5em minus
  0.4em\relax Springer, Cham, 2018.

\bibitem{Sion1958}
M.~Sion, ``{On general minimax theorems},'' \emph{Pacific J. Math.}, vol.~8,
  no.~1, pp. 171--176, 1958.

\bibitem{Streif2014}
S.~Streif, D.~Henrion, and R.~Findeisen, ``{Probabilistic and set-based model
  invalidation and estimation using LMIs},'' \emph{IFAC Proc. Volumes},
  vol.~19, no.~3, pp. 4110--4115, 2014.

\bibitem{Streif2013}
S.~Streif, P.~Rumschinski, D.~Henrion, and R.~Findeisen, ``{Estimation of
  consistent parameter sets for continuous-time nonlinear systems using
  occupation measures and LMI relaxations},'' \emph{Proc.
  52\textsuperscript{nd} IEEE Conf. Decis. Control}, pp. 6379--6384, 2013.

\bibitem{Terejanu2008}
G.~Terejanu, P.~Singla, T.~Singh, and P.~D. Scott, ``Uncertainty propagation
  for nonlinear dynamic systems using gaussian mixture models,'' \emph{J.
  Guidance, Control, Dyn.}, vol.~31, pp. 1623--1633, 2008.

\bibitem{ValliTesi}
M.~Valli, R.~Armellin, P.~Di~Lizia, and M.~R. Lavagna, ``Nonlinear mapping of
  uncertainties in celestial mechanics,'' \emph{J. Guidance, Control, Dyn.},
  vol.~36, no.~1, pp. 48--63, 2013.

\bibitem{Vasile2019}
M.~Vasile, ``Practical uncertainty quantification in orbital mechanics,'' in
  \emph{Satellite Dynamics and Space Missions}, G.~Ba{\`u}, A.~Celletti, C.~B.
  Galeș, and G.~F. Gronchi, Eds.\hskip 1em plus 0.5em minus 0.4em\relax
  Springer, 2019, pp. 291--328.

\end{thebibliography}
\end{document}